\newcommand{\Jarnik}{Jarn{\'\i}k}
\numberwithin{equation}{section}
\def\eq#1{{\rm(\ref{#1})}}
\def\Eq#1#2{\ifthenelse{\equal{#1}{*}}
  {\begin{equation*}\begin{aligned}[]#2\end{aligned}\end{equation*}}
  {\begin{equation}\begin{aligned}[]\label{#1}#2\end{aligned}\end{equation}}}
\newtheorem{thm}{Theorem}
\newtheorem{cor}{Corollary}
\newtheorem{prop}{Proposition}
\newtheorem{lem}{Lemma}
\newtheorem{rem}{Remark}
\numberwithin{equation}{section}
\newcommand\nolabel[1]{\nonumber}
\newcommand\x{{\mathbf{x}}}
\newcommand\R{\mathbb{R}}
\renewcommand\P{\mathscr{P}}
\newcommand\N{\mathbb{N}}
\newcommand\calC{\mathcal{C}}
\newcommand\Cts{\mathcal{C}^{2\#}}
\newcommand{\QA}[1]{\mathscr{A}^{[#1]}}
\newcommand{\scrI}{\mathscr{I}}
\DeclareMathOperator{\cl}{cl}
\DeclareMathOperator{\interior}{int}
\DeclareMathOperator{\CM}{\mathcal{CM}}
\title{
On the $L^1$ and pointwise divergence of continuous functions
}
\subjclass[2010]{26E60, 54C30, 26A15, 26A30, 28A78}
\keywords{Quasiarithmetic means, limit properties, function spaces, Hausdorff  dimension}
\author{Karol Gryszka}
\author{Pawe\l{} Pasteczka}
\address{Institute of Mathematics, Pedagogical University of Krak\'ow, Podchor\k{a}\.{z}ych str 2, 30-084 Krak\'ow, Poland}
\email{\{karol.gryszka, pawel.pasteczka\}@up.krakow.pl}
\begin{document}
\begin{abstract}
For a family of continuous functions $f_1,f_2,\dots \colon I \to \mathbb{R}$ ($I$ is a fixed interval) with $f_1\le f_2\le \dots$ define a set
 $$ I_f:=\big\{x \in I \colon \lim_{n \to \infty} f_n(x)=+\infty\big\}.$$
We study the properties of the family of all admissible $I_f$-s and the family of all admissible $I_f$-s under the additional assumption
$$ \lim_{n \to \infty} \int_x^y f_n(t)\:dt=+\infty \quad \text{ for all }x,y \in I\text{ with }x<y.$$

The origin of this problem is the limit behaviour of quasiarithmetic means.
\end{abstract}
\maketitle

%

\section{Introduction}

Quasiarihmetic means were introduced in 1920-s/30-s by de Finetti \cite{Def31}, Knopp \cite{Kno28}, Kolmogorov \cite{Kol30} and Nagumo \cite{Nag30}. For a continuous and strictly monotone function $F \colon I \to \R$ (here and below $I$ stands for an arbitrary subinterval of $\R$ and $\CM(I)$ stands for a family of all continuous and strictly monotone functions on $I$) we define the \emph{quasiarithmetic mean} $\QA{F} \colon \bigcup_{n=1}^\infty I^n \to I$ by 
\Eq{*}{
\QA{F}(a):=F^{-1} \bigg( \frac{F(a_1)+\cdots+F(a_n)}n \bigg)\qquad \text{ where }n \in \N \text{ and }a=(a_1,\dots,a_n) \in I^n.
}
A function $F$ is called a \emph{generating function} or a \emph{generator} of $\QA{F}$.

It was Knopp \cite{Kno28} who noticed that for $I=\R_+$, $\pi_p(x):=x^p$ ($p\ne 0$) and $\pi_0(x):=\ln x$, the quasiarithmetic mean $\QA{\pi_p}$ coincides with the $p$-th power mean $\P_p$.

Adapting the classical result $\lim_{p \to +\infty} \P_p = \max$ we say that a family  $(F_n)_{n=1}^\infty$ in $\CM(I)$ is \emph{QA-maximal} provided 
\Eq{*}{
\lim_{n \to\infty}\QA{F_n} = \max \text{ pointwise.}
}

There are few approaches to this property. First, applying some general results by P\'ales \cite{Pal91}, we can establish the general equivalent condition of being QA-maximal (see also \cite{Pas16a}). More precisely, a sequence $(F_n)_{n=1}^\infty$ of elements in $\CM(I)$ is a QA-maximal family if and only if
\Eq{*}{
\lim_{n \rightarrow \infty}\frac{F_n(x)-F_n(y)}{F_n(z)-F_n(y)} =0\qquad \text{ for all }x,\,y,\,z \in I\text{ with }x<y<z.
}
It is a particular case of an analogous result for deviation and quasideviation means -- see the papers by Dar\'oczy 
\cite{Dar71b,Dar72b}, Dar\'oczy--Losonczi \cite{DarLos70}, Dar\'oczy--P\'ales \cite{DarPal82,DarPal83}, and by P\'ales \cite{Pal82a,Pal83b,Pal84a,Pal85a,Pal88a,Pal88d,Pal88e} for detailed study of these families.

It turns out that under the additional assumption that each generator is twice continuously differentiable with nowhere vanishing first derivative -- from now on we denote family of all such generators by $\Cts(I)$ -- we can establish another equivalent conditions. More precisely, by \cite{Pas16a}, a family $(F_n)_{n=1}^\infty$ of elements in $\Cts(I)$ such that $\frac{F_n''}{F_n'}$ is uniformly lower bounded is QA-maximal if and only if
\Eq{*}{
\lim_{n \to \infty} \int_x^y \frac{F_n''(t)}{F_n'(t)}dt=+\infty \qquad \text{ for all }x,y \in I \text{ with }x<y.
}

Let us emphasize that the operator $\frac{F''}{F'}$ plays a key role in a comparability of quasiarithmetic means. More precisely, by Jensen inequality, for all $F,G \in \Cts(I)$ we have
\Eq{*}{
\QA{F}\le \QA{G} \iff \frac{F''}{F'} \le \frac{G''}{G'}.
}
In view of \cite{Pal13} if $F_1,F_2,\dots \in \Cts(I)$ such that $\QA{F_1}\le \QA{F_2} \le \dots$ then, by \cite{Pas13}, the maximal property is connected with the set
$$\scrI_F:=\bigg\{x \in I \colon \lim_{n\to \infty} \frac{F_n''(x)}{F_n'(x)}=+\infty \bigg\}.
$$
Namely, it was proved that if $\scrI_F=I$ then $(F_n)_{n=1}^\infty$ is QA-maximal. Conversely, for every QA-maximal family $(F_n)_{n=1}^\infty$ the set $\scrI_F$ is a dense subset of $I$.

These results were strengthened in \cite{Pas16a}. More precisely, there is proved  that if the intersection of $\scrI_F$ with an arbitrary open subset of $I$ has a positive Lebesgue measure $\lambda$, then $(F_n)_{n=1}^\infty$ is QA-maximal. 
This assumption is somehow the weakest possible, as for every $X \subset I$ such that $\lambda(X \cap J)=0$ for some open subinterval $J$ of $I$ there exists a family $(G_n)_{n=1}^\infty$ which is not QA-maximal, however $X \subset \scrI_G$. On the other hand, it was proved that $\scrI_Z$ could have the Hausdorff dimension zero for some QA-maximal family $(Z_n)_{n=1}^\infty$.

In what follows our aim is to study the relation between the property of being a max-family and the corresponding set $\mathscr{I}$.

Let us emphasize that the same consideration remains valid for QA-minimal and $\min$-families (with a natural definition). As a matter of fact we can reapply all results below to the reflected means -- for detailed study of reflected means we refer the reader to recent development by Chudziak-P\'ales-Pasteczka \cite{ChuPalPas19}, P\'ales-Pasteczka \cite{PalPas18b} and Pasteczka \cite{Pas16b}. 

\subsection{Rephrasing of the problem}
As all conditions above are expressed in terms of the operator $F \mapsto F''/F'$ we are going to elaborate the properties of this operator. To this end, for a sequence of continuous functions $f_1,f_2,\dots \colon I \to \R$ with $f_1\le f_2 \le \dots$ define
\Eq{*}{
I_f:=\big\{x \in I \colon \lim_{n\to \infty} f_n(x)=+\infty \big\}.
}
Furthermore, let $\Omega(I)$ be a family of all possible $I_f$-s. More precisely,
\Eq{*}{
\Omega(I):=\big\{ I_f \colon f_1,f_2,\dots \in \calC(I) \text{ and }f_1\le f_2\le \dots \big\}.
}
Additionally, if
\Eq{E:intinfty}{
\lim_{n \to \infty} \int_x^y f_n(t)dt=+\infty \qquad \text{ for all }x,y \in I \text{ with }x<y,
}
then the family $(f_n)_{n=1}^\infty$ is called \emph{max-family}.
Based on the previous section we obtain the following results.
\begin{prop}[\cite{Pas16a}, Proposition~4.1]\label{Prop:P1}
Let $X\subset I$ be an arbitrary set.
If $\lambda(X \cap J)=0$ for some open interval $J$, then there exists a sequence 
$(f_n \colon I \to \R)_{n=1}^\infty$ of continuous functions with $f_1 \le f_2\le\dots$ which is not a max-family, although $I_f \supset X$.
\end{prop}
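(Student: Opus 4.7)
The plan is to build $(f_n)$ as a sum of two ingredients: tall, narrow continuous bumps around $X\cap J$ whose total Lebesgue mass is summable, plus a separate term that handles $X\setminus J$ without affecting a fixed subinterval of $J$. Concretely, I may assume $J\subset I$, fix a closed subinterval $[\alpha,\beta]\subset J$ with $\alpha<\beta$, and aim to show that $\int_\alpha^\beta f_n(t)\,dt$ stays bounded, which refutes~\eqref{E:intinfty} for this pair.

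For the bumps, I use outer regularity of Lebesgue measure: for each $n$ choose open sets $V_n,V_n'\subset J$ with $X\cap J\subset V_n$, $\overline{V_n}\subset V_n'$, and $\lambda(V_n')<4^{-n}$. Urysohn's lemma (or an explicit piecewise-linear interpolation) then yields a continuous $\varphi_n\colon I\to[0,1]$ equal to $1$ on $\overline{V_n}$ and vanishing off $V_n'$; set $g_n:=2^n\varphi_n$. Thus $g_n\ge 0$, $g_n\equiv 2^n$ on $X\cap J$, and $\int_I g_n\le 2^n\lambda(V_n')<2^{-n}$.

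To deal with $X\setminus J$, pick any continuous $\psi\colon I\to[0,\infty)$ that vanishes on $[\alpha,\beta]$ and is strictly positive elsewhere on $I$ (for instance $\psi(x)=\mathrm{dist}(x,[\alpha,\beta])$). Since $[\alpha,\beta]\subset J$, we have $\psi>0$ on $I\setminus J\supset X\setminus J$. Now define
\[
f_n:=\sum_{k=1}^n g_k+n\psi.
\]
Continuity is clear, and monotonicity follows from $f_{n+1}-f_n=g_{n+1}+\psi\ge 0$. For $x\in X\cap J$ we have $f_n(x)\ge\sum_{k=1}^n 2^k\to\infty$; for $x\in X\setminus J$ we have $f_n(x)\ge n\psi(x)\to\infty$; hence $I_f\supset X$. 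Since $\psi\equiv 0$ on $[\alpha,\beta]$,
\[
\int_\alpha^\beta f_n(t)\,dt=\sum_{k=1}^n\int_\alpha^\beta g_k(t)\,dt\le\sum_{k=1}^\infty 2^{-k}=1,
\]
so~\eqref{E:intinfty} is violated and $(f_n)$ is not a max-family.

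There is no serious obstacle in this argument; the only point worth double-checking is the balance between the heights and widths of the bumps (height $2^n$ against measure $\le 4^{-n}$), which must be arranged so that the series of integrals is summable rather than merely bounded in $n$ --- any other summable pairing would serve equally well.
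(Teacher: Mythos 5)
Your overall architecture (tall bumps of summable mass over $X\cap J$, plus the term $n\psi$ with $\psi=\operatorname{dist}(\cdot,[\alpha,\beta])$ to force divergence on $X\setminus J$) is the right idea, and the $\psi$-part, the monotonicity check and the final integral estimate are all fine \emph{granted} the bumps. But the construction of the bumps has a genuine gap, and it occurs precisely in the most important instance of the proposition, namely when $X\cap J$ is dense in $J$ (e.g.\ $X=\Q\cap I$). You ask for open sets $V_n\supset X\cap J$ and $V_n'\supset\overline{V_n}$ with $\lambda(V_n')<4^{-n}$. If $X\cap J$ is dense in $J$, then every open $V_n\supset X\cap J$ is dense in $J$, hence $\overline{V_n}\supset\overline{J}$ and every open $V_n'\supset\overline{V_n}$ has $\lambda(V_n')\ge\lambda(J)>0$; no such $V_n'$ of measure $<4^{-n}$ exists. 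This is not an artefact of how you chose the neighbourhoods: \emph{any} continuous $g_k$ with $g_k=2^k$ on $X\cap J$ satisfies $g_k\ge 2^k$ on $\overline{X\cap J}\supset[\alpha,\beta]$ by continuity, so $\int_\alpha^\beta g_k\ge 2^k(\beta-\alpha)$, and your $f_n$ would in fact satisfy $\int_\alpha^\beta f_n\to+\infty$, i.e.\ the construction defeats its own purpose. The strategy of making each $g_k$ attain the full height $2^k$ on \emph{all} of $X\cap J$ simultaneously cannot work; the divergence $f_n(x)\to+\infty$ on $X\cap J$ must be allowed to be non-uniform in $x$. (The point you flag as the only one worth double-checking --- the height-versus-width bookkeeping --- is harmless; the obstruction is topological, not arithmetic.)

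A repair along your own lines: choose nested open sets $U_1\supset U_2\supset\cdots\supset X\cap J$ with $\lambda(U_k)<2^{-k}$ and set
\Eq{*}{
f_N:=N\psi+\sum_{k=1}^{N}\min\big(1,\,N\operatorname{dist}(x, I\setminus U_k)\big).
}
Each summand is continuous, nonnegative, bounded by $1$ and vanishes off $U_k$, so $\int_\alpha^\beta f_N\le\sum_{k=1}^\infty\lambda(U_k)\le 1$ (recall $\psi\equiv0$ on $[\alpha,\beta]$), while $f_N$ is nondecreasing in $N$ because each term $\min(1,N\operatorname{dist}(\cdot,I\setminus U_k))$ is nondecreasing in $N$ and new nonnegative terms are appended. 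For $x\in X\cap J$ we have $\operatorname{dist}(x,I\setminus U_k)>0$ for every $k$, so for each fixed $M$ the first $M$ summands all tend to $1$ as $N\to\infty$, whence $f_N(x)\to+\infty$; combined with $N\psi\to+\infty$ off $[\alpha,\beta]$ this gives $I_f\supset X$ while \eq{E:intinfty} fails for the pair $(\alpha,\beta)$. Note finally that the paper itself imports this statement from \cite{Pas16a} without reproducing a proof, so there is no in-paper argument to compare yours against.
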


\begin{prop}[\cite{Pas16a}, Proposition~4.2]\label{Prop:P2}
Let $(f_n \colon I \to \R)_{n=1}^\infty$ be a family of continuous functions with $f_1 \le f_2\le\dots$ and such that $I_f$ intersected with each open subinterval of $I$ has a positive Lebesgue measure. Then $(f_n)$ is a max-family. 
\end{prop}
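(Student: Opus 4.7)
The plan is to invoke the monotone convergence theorem and use the hypothesis to force the pointwise limit of $(f_n)$ to equal $+\infty$ on a set of positive Lebesgue measure inside the interval of integration.

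I would fix $x,y \in I$ with $x<y$ and note that since $f_n \le f_{n+1}$, the integrals $\int_x^y f_n(t)\,dt$ form a non-decreasing sequence converging in $\R \cup \{+\infty\}$. The natural next step is to reduce to a nonnegative setting: continuity of $f_1$ on the compact interval $[x,y]$ provides some $M \ge 0$ with $f_1 \ge -M$ on $[x,y]$, and then $g_n := f_n + M$ is an increasing sequence of nonnegative continuous functions. Denoting its pointwise limit by $g_\infty \colon [x,y] \to [0,+\infty]$, the monotone convergence theorem yields
\Eq{*}{
\lim_{n \to \infty} \int_x^y g_n(t)\,dt = \int_x^y g_\infty(t)\,dt,
}
while the integrals of $f_n$ and $g_n$ over $[x,y]$ differ only by the constant $M(y-x)$.

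It then remains to verify $\int_x^y g_\infty(t)\,dt = +\infty$. By construction, $g_\infty(t) = +\infty$ precisely when $t \in I_f$, and applying the hypothesis to the open subinterval $(x,y)$ gives $\lambda\bigl(I_f \cap (x,y)\bigr) > 0$; thus $g_\infty$ attains the value $+\infty$ on a subset of $[x,y]$ of positive measure, which forces the Lebesgue integral to be $+\infty$. This confirms \eq{E:intinfty} and proves that $(f_n)$ is a max-family.

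I do not anticipate any serious obstacle: the only small technicality is the shift by $M$ required before invoking MCT in its standard (nonnegative) formulation, since the $f_n$ themselves need not be nonnegative. Viewed alongside Proposition~\ref{Prop:P1}, this result identifies \emph{positive Lebesgue measure in every open subinterval} as precisely the critical threshold on $I_f$ guaranteeing the max-family property.
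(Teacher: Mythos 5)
Your argument is correct: reducing to nonnegative functions by the shift $M$, applying the monotone convergence theorem on $[x,y]$, and noting that $g_\infty=+\infty$ on the positive-measure set $I_f\cap(x,y)$ forces the limit of the integrals to be $+\infty$, which is exactly condition \eq{E:intinfty}. The paper itself does not reprove this proposition (it is quoted from \cite{Pas16a}), but your monotone-convergence argument is the standard and essentially inevitable proof of this statement, so there is nothing to object to.
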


Propositions above motivates us to define 
\Eq{*}{
\Omega_0(I):=\big\{ I_f \colon (f_n \colon I \to \R)_{n=1}^\infty \text{ is a max-family} \big\}.
}
Obviously $\Omega_0(I) \subseteq \Omega(I)$. 
The aim of this paper is to show several important facts concerning the set $\Omega(I)$, $\Omega_0(I)$, and their common relations. 

\section{General properties of $\Omega(I)$ and $\Omega_0(I)$}
First, let us present our initial result which shows that $\Omega(I)$ consists of $G_\delta$ sets only.
\begin{lem}
Let $(f_n \colon I \to \R)_{n=1}^\infty$ be a family of continuous function with $f_1\le f_2\le \cdots$. Then $I_f$ is a $G_\delta$ set.
\end{lem}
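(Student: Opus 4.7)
The plan is to rewrite $I_f$ as an explicit countable intersection of open sets, exploiting the monotonicity of $(f_n)$. Since $f_1(x)\le f_2(x)\le\cdots$ for every fixed $x\in I$, the sequence $(f_n(x))$ is monotone, hence the condition $\lim_{n\to\infty}f_n(x)=+\infty$ is equivalent to the assertion that for every $M\in\N$ there exists some index $n$ with $f_n(x)>M$. (The usual ``for all sufficiently large $n$'' is automatic here because once $f_n(x)>M$, monotonicity forces $f_m(x)>M$ for all $m\ge n$.)

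Using this reformulation, I would write
\Eq{*}{
I_f=\bigcap_{M\in\N}\bigcup_{n\in\N}\bigl\{x\in I\colon f_n(x)>M\bigr\}.
}
Each individual set $\{x\in I\colon f_n(x)>M\}$ is open in $I$ because $f_n$ is continuous, and consequently the inner union $U_M:=\bigcup_{n\in\N}\{f_n>M\}$ is open in $I$ as a union of open sets. Therefore $I_f=\bigcap_{M\in\N}U_M$ exhibits $I_f$ as a countable intersection of open sets, which by definition is a $G_\delta$ set.

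There is no real obstacle here; the only subtlety worth double-checking is the equivalence in the first paragraph, i.e. that monotonicity lets us replace the genuine $\varepsilon$-$N$ definition of divergence to $+\infty$ by the simpler ``$\exists n\colon f_n(x)>M$'' for each threshold $M$. Everything else is a direct appeal to continuity and the standard closure properties of open sets under arbitrary unions and countable intersections.
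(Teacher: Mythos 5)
Your proof is correct and follows essentially the same route as the paper: both express $I_f$ as $\bigcap_{M}\bigcup_{n}\{x\in I\colon f_n(x)>M\}$, using monotonicity to reduce the eventual-threshold condition to a single existential, and continuity to get openness of each level set. The only cosmetic difference is that the paper first writes the general $\bigcap_M\bigcup_N\bigcap_{n\ge N}$ formula and then collapses the inner intersection by monotonicity, whereas you invoke monotonicity up front.
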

\begin{proof}
Set $A_{n,m}:=\{x\in I\colon f_n(x)>M\}$ where $m, n\in\mathbb{N}$. Then $A_{n,m}$ are open and
$$x\in I_f\iff x\in \bigcap_{M\in \mathbb{N}}\bigcup_{N\in\mathbb{N}}\bigcap_{n\geq N}A_{n,M}.$$
But the sequence $(f_n)_{n=1}^\infty$ is monotone, thus 
$\bigcap_{n\geq N}A_{n,M}=A_{N,M}$ and $I_f$ is $G_\delta$.
\end{proof}
Let us now prove that the set $\Omega(I)$ is closed under finite union and closed under countable intersection (with additional assumption).
\begin{lem}\label{lem:sum}
Let $I$ be an interval and $(f_n \colon I \to \R)_{n=1}^\infty$ and $(g_n \colon I \to \R)_{n=1}^\infty$ be two families of continuous functions with $f_1 \le f_2\le\dots$ and $g_1 \le g_2\le\dots$. 
Then $I_{f+g}=I_f\cup I_g$.
\end{lem}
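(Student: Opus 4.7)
The plan is to observe first that because each $f_n$ and $g_n$ is continuous and both sequences are non-decreasing, the sequence $(f_n + g_n)$ is itself a non-decreasing sequence of continuous functions, so $I_{f+g}$ is a well-defined element of $\Omega(I)$. Then I would prove the two inclusions separately, exploiting the fact that for a monotone non-decreasing real sequence the only alternative to divergence to $+\infty$ is convergence to a finite limit.

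For the inclusion $I_f \cup I_g \subseteq I_{f+g}$, I would take $x \in I_f$ and use the monotonicity of $(g_n)$ to bound $g_n(x) \ge g_1(x)$, where $g_1(x) \in \R$ because $g_1$ is continuous (in particular finite-valued). Thus $f_n(x) + g_n(x) \ge f_n(x) + g_1(x) \to +\infty$, giving $x \in I_{f+g}$. The case $x \in I_g$ is symmetric.

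For the reverse inclusion $I_{f+g} \subseteq I_f \cup I_g$, I would argue by contraposition: if $x \notin I_f \cup I_g$, then neither $f_n(x) \to +\infty$ nor $g_n(x) \to +\infty$, so monotonicity of each sequence forces $\lim_n f_n(x) = L_f \in \R$ and $\lim_n g_n(x) = L_g \in \R$. Consequently $\lim_n (f_n(x)+g_n(x)) = L_f + L_g < +\infty$, so $x \notin I_{f+g}$.

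There is no real obstacle here; the only ingredient beyond set-chasing is the dichotomy for monotone sequences together with the trivial but essential observation that the lower bound $g_1(x)$ (respectively $f_1(x)$) is finite, so adding the other sequence cannot destroy divergence to $+\infty$ of the first. The lemma, although elementary, will be useful later because it shows that $\Omega(I)$ is closed under finite unions.
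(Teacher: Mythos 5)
Your proof is correct and follows essentially the same route as the paper: the forward inclusion via the finite lower bounds $f_1(x)$, $g_1(x)$, and the reverse inclusion via the dichotomy for monotone sequences (the paper phrases it directly as $\lim(f_n+g_n)=\lim f_n+\lim g_n$ in $\R\cup\{+\infty\}$, while you take the contrapositive, which is if anything slightly more careful).
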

\begin{proof}
Observe that as $f_n\ge f_1$ and $g_n\ge g_1$ we obtain $I_f\subseteq I_{f+g}$ and $I_g\subseteq I_{f+g}$.
Therefore $I_f \cup I_g \subseteq I_{f+g}$. 

To prove the converse inclusion take $x \in I_{f+g}$ arbitrarily. Then we have
\Eq{*}{
+\infty=\lim_{n \to \infty}\big( f_n(x)+g_n(x)\big)=\lim_{n \to \infty} f_n(x)+\lim_{n \to \infty}g_n(x),
}
which shows that $\lim_{n \to \infty} f_n(x)=+\infty$ or $\lim_{n \to \infty}g_n(x)=+\infty$, i.e. $x \in I_f \cup I_g$.
\end{proof}

\begin{cor}
Let $I \subset \R$ be an arbitrary interval. Then
\begin{enumerate}[(i)]
    \item for all $J, K \in \Omega(I)$ we have $J \cup K \in \Omega(I)$;
    \item for all $J \in \Omega(I)$ and $K \in \Omega_0(I)$ we have $J\cup K \in \Omega_0(I)$.
\end{enumerate}
\end{cor}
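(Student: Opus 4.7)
The plan is to apply Lemma~\ref{lem:sum} directly and, for part~(ii), merely check that the max-family condition \eqref{E:intinfty} is preserved under the addition used there.

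For part~(i), take $J,K \in \Omega(I)$ and pick witnesses $(f_n)_{n=1}^\infty$ and $(g_n)_{n=1}^\infty$ of non-decreasing continuous functions on $I$ with $J = I_f$ and $K = I_g$. The sequence $(f_n+g_n)_{n=1}^\infty$ is again a non-decreasing sequence of continuous functions, so $I_{f+g}\in\Omega(I)$; but Lemma~\ref{lem:sum} gives $I_{f+g}=I_f\cup I_g=J\cup K$, and (i) follows.

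For part~(ii), take $J\in\Omega(I)$ with witness $(f_n)$ and $K\in\Omega_0(I)$ with witness $(g_n)$, where in addition $(g_n)$ satisfies \eqref{E:intinfty}. As in (i) the sum $(f_n+g_n)$ is a non-decreasing sequence of continuous functions with $I_{f+g}=J\cup K$, so it suffices to show that $(f_n+g_n)$ is itself a max-family. Fix $x,y\in I$ with $x<y$. Since $f_1$ is continuous on the compact interval $[x,y]$, the number $C:=\int_x^y f_1(t)\,dt$ is finite, and the monotonicity assumption $f_1\le f_n$ yields
\Eq{*}{
\int_x^y\bigl(f_n(t)+g_n(t)\bigr)\,dt \;\ge\; C + \int_x^y g_n(t)\,dt \;\xrightarrow[n\to\infty]{}\; +\infty,
}
so condition \eqref{E:intinfty} holds for $(f_n+g_n)$, proving $J\cup K\in\Omega_0(I)$.

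There is essentially no obstacle here: the work was already done in Lemma~\ref{lem:sum}. The only point requiring a moment of care in (ii) is that one must not try to bound $\int_x^y f_n$ from below by a constant independent of $n$ via, say, a uniform argument; the bound comes solely from the first term $f_1$ of the sequence together with the monotonicity $f_1\le f_n$, which is enough because divergence of $\int_x^y g_n$ to $+\infty$ absorbs any fixed finite constant $C$.
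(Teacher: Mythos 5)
Your proof is correct and is exactly the argument the paper intends: the corollary is stated without proof immediately after Lemma~\ref{lem:sum}, and deriving it amounts to applying that lemma to the sum $(f_n+g_n)$ and, for part (ii), observing that $\int_x^y f_n \ge \int_x^y f_1$ is a fixed finite constant while $\int_x^y g_n \to +\infty$. Nothing is missing.
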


Observe that Lemma \ref{lem:sum} fails to be true for countable sequence of families of continuous functions. To see that set take two sequences of families $\big(f^{(1)}_n\big)_{n=1}^\infty,\big(f^{(2)}_n\big)_{n=1}^\infty,\dots$ and $\big(g^{(1)}_n\big)_{n=1}^\infty,\big(g^{(2)}_n\big)_{n=1}^\infty,\dots$ on $\R$ defined by
\begin{align*}
f_n^{(1)}&\equiv \phantom{-}n, &\qquad \text{ for all }n \in \N;\\
f_n^{(i)}&\equiv -1,&\qquad \text{ for all }n \in \N \text{ and }i\ge2,\\
g_n^{(i)}&\equiv \phantom{-}1,&\qquad \text{ for all }n \in \N \text{ and }i\ge1.
\end{align*}
Then we have
\Eq{*}{
I_{\sum_i f^{(i)}}=\emptyset,\quad\text{ while }\quad \bigcup_iI_{f^{(i)}}=\R.
}
On the other hand
\Eq{*}{
I_{\sum_i g^{(i)}}=\R\quad\text{ and }\quad \bigcup_i I_{g^{(i)}}=\emptyset.
}
Above equalities show that both inclusions may fail.


Let us note as a curiosity the following remark that mimics Lemma \ref{lem:sum}.
\begin{rem}\label{lem:product}
Let $I$ be an interval and $(f_n \colon I \to \R)_{n=1}^\infty$ and $(g_n \colon I \to \R)_{n=1}^\infty$ be two families of continuous functions with $f_1 \le f_2\le\dots$ and $g_1 \le g_2\le\dots$. 
Then $I_{f\cdot g}= I_f\cup I_g$ provided that if $x\in I_f$ ($x\in I_g$), then there is $\eta\in (0,+\infty]$ such that $g_n(x)\to \eta$ ($f_n(x)\to \eta$).
\end{rem}
\begin{proof}
Take $x \in I_{f\cdot g}$ arbitrarily. Then we have
\Eq{*}{
+\infty=\lim_{n \to \infty} \big(f_n(x)\cdot g_n(x)\big)
}
which is possible when $\lim_{n \to \infty} f_n(x)$ and $\lim_{n \to \infty}g_n(x)$ are non-negative and at least one of them is $+\infty$. Hence $x \in I_f \cup I_g$.

To show the converse inclusion assume that $x\in I_f$ and $\lim\limits_{n\to\infty}g_n(x)=\eta>0$. Then,
$$\lim\limits_{n\to\infty}\big(f_n(x)\cdot g_n(x)\big)= \lim\limits_{n\to\infty}f_n(x)\cdot \lim\limits_{n\to\infty}g_n(x)=+\infty\cdot \eta=+\infty$$
and $x\in I_{f\cdot g}$. Thus $I_f\subseteq I_{f\cdot g}$ and similarly, $I_g\subseteq I_{f\cdot g}$.
\end{proof}

\begin{lem}\label{lem:4}
Let $I$ be an interval and $(D_n)_{n=0}^\infty$ be a family of closed subsets of $I$ such that 
$D_0=I$ and $D_{n+1} \subset \interior D_n$ for all $n \in \N_+ \cup\{0\}$.
Then $\bigcap_{n=0}^\infty D_n \in \Omega(I)$.
\end{lem}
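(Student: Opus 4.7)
The strategy is to build the $f_n$ as partial sums of a sequence of non-negative continuous bumps $\phi_k$, each of which is $1$ on $D_k$ and vanishes off a slightly larger set contained in $D_{k-1}$. The nested-interior hypothesis $D_{k}\subset \interior D_{k-1}$ is exactly what guarantees that such $\phi_k$ can be produced by Urysohn's lemma (applied in the metric space $I$, which is normal), giving $\phi_k\colon I\to[0,1]$ continuous with $\phi_k\equiv 1$ on $D_k$ and $\phi_k\equiv 0$ on $I\setminus \interior D_{k-1}$.

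I then set
\Eq{*}{
f_n:=\sum_{k=1}^{n}\phi_k.
}
Each $f_n$ is continuous, and $f_{n+1}-f_n=\phi_{n+1}\ge 0$, so the monotonicity condition $f_1\le f_2\le\cdots$ holds. The remaining task is to verify $I_f=\bigcap_{n=0}^\infty D_n$.

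For the inclusion $\bigcap D_n\subseteq I_f$: if $x\in D_k$ for every $k$, then $\phi_k(x)=1$ for all $k\ge 1$, hence $f_n(x)=n\to+\infty$. For the reverse inclusion: if $x\notin\bigcap D_n$, pick $k_0\ge 1$ with $x\notin D_{k_0}$. Since the sequence $(D_n)$ is decreasing (because $D_{k}\subset\interior D_{k-1}\subset D_{k-1}$), we have $x\notin D_{k-1}$ for every $k\ge k_0+1$, and in particular $x\in I\setminus \interior D_{k-1}$, so $\phi_k(x)=0$ for all such $k$. Hence $f_n(x)$ stabilizes at $\sum_{k=1}^{k_0}\phi_k(x)\le k_0$ once $n\ge k_0$, and $x\notin I_f$.

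There is no real obstacle: the only point that requires care is ensuring $D_k$ and $I\setminus \interior D_{k-1}$ are disjoint closed sets so that Urysohn applies, and this is immediate from the hypothesis $D_k\subset\interior D_{k-1}$. The choice $D_0=I$ also makes $\phi_1$ well defined (and causes it to vanish only on the endpoints of $I$, if any).
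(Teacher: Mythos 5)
Your proposal is correct and follows essentially the same route as the paper: a Urysohn/Tietze bump for each level of the nest (yours is $1$ on $D_k$ and $0$ off $\interior D_{k-1}$, the paper's is $1$ on $D_{n+1}$ and $0$ off $D_n$ --- only an index shift), summed into a monotone sequence, with the same two inclusions verified in the same way. No gaps.
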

\begin{proof}
Indeed, in view of Tietze(-Urysohn-Brouwer) theorem, for every $n \in \N$ there exists a continuous function $\delta_n \colon I \to [0,1]$ such that
\Eq{*}{
 \delta_n(x)=
 \begin{cases}
 0 & \text{ for }x \in I \setminus D_n; \\
 1 & \text{ for }x \in D_{n+1}.
 \end{cases}
}
Define $d_n:=\sum_{i=0}^n \delta_n$. Then for every $x \in \bigcap_{k=0}^\infty D_k=:D_\infty$ we have $d_n(x)=n$. In particular $I_d \supseteq D_\infty$. 
Now fix $N \in \N$. Then as $D_n \subseteq D_{N}$ for $n \ge N$, we have $\delta_n(x)=0$ for $x\in I\setminus D_{N}$. Thus 
\Eq{*}{
d_n(x)<N \qquad \text{ for all }n \ge N \text{ and }x \in I \setminus D_{N}.
}

This proves that $I_d \subseteq D_{N}$. As $N$ was an arbitrary natural number we get $I_d \subseteq D_\infty$, which completes the equality $I_d=\bigcap_{k=0}^\infty D_n$.
Therefore $\bigcap_{k=0}^\infty D_n \in \Omega(I)$.
\end{proof}

\begin{lem}[\cite{Pas16a}, Proposition 4.3]\label{dH0}
For every interval $I$ there exists a max-family $(z_n \colon I \to \R_+)_{n=1}^\infty$ such that $\dim_H I_z=0$. 

In particular $\Omega_0(I)$ contains a set of zero Hausdorff dimension.
\end{lem}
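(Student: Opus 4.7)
The strategy is to realise $z_n$ as successive truncations of a function with dense singularities, calibrated so that the divergence set sits inside a $\limsup$ of rapidly shrinking balls, hence has Hausdorff dimension zero. First I would fix an enumeration $\{q_k\}_{k=1}^\infty$ of a countable dense subset of $I$, choose weights $a_k:=4^{-k}$, and let $\psi_n(t):=\min(n,|t|^{-1})$ for $t\ne 0$ with $\psi_n(0):=n$, which is continuous on $\R$ and increasing in $n$. Then set
\[
 z_n(x):=1+\sum_{k=1}^\infty a_k\,\psi_n(x-q_k).
\]
Since $\psi_n\le n$ and $\sum_k a_k<\infty$, the series converges uniformly on $I$, so each $z_n\colon I\to \R_+$ is continuous and strictly positive; monotonicity $z_n\le z_{n+1}$ is inherited from $\psi_n$, and monotone convergence gives $z_n\uparrow z_\infty$, where $z_\infty(x):=1+\sum_{k=1}^\infty a_k/|x-q_k|$.

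The max-family condition then follows immediately. Given $x<y$ in $I$, density of $(q_k)$ supplies some $q_{k_0}\in(x,y)$ and
\[
 \lim_{n\to\infty}\int_x^y z_n(t)\,dt=\int_x^y z_\infty(t)\,dt\ge a_{k_0}\int_x^y \frac{dt}{|t-q_{k_0}|}=+\infty,
\]
so condition \eqref{E:intinfty} holds.

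The main step is to establish $\dim_H I_z=0$ for $I_z=\{x\in I: z_\infty(x)=+\infty\}$. The countable set $\{q_k\}\subset I_z$ is trivially zero-dimensional. For $x\notin \{q_k\}$, if $|x-q_k|\ge a_k^{1/2}=2^{-k}$ for all but finitely many $k$, then $a_k/|x-q_k|\le 2^{-k}$ for such $k$, which forces $z_\infty(x)<\infty$. Therefore
\[
 I_z \;\subset\; \{q_k:k\ge 1\}\cup\limsup_{k\to\infty} B(q_k,2^{-k}).
\]
For every $s>0$ the series $\sum_k (2\cdot 2^{-k})^s$ converges, and the standard Hausdorff-measure version of the Borel--Cantelli lemma then yields $H^s\bigl(\limsup_k B(q_k,2^{-k})\bigr)=0$. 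Since $s>0$ was arbitrary, $\dim_H I_z=0$.

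I expect no serious obstacle; the only point requiring a little care is the Hausdorff-measure estimate on the $\limsup$ set, but this becomes automatic once the weights are chosen so rapidly decaying that $\sum_k a_k^{s/2}<\infty$ for every $s>0$, which $a_k=4^{-k}$ clearly satisfies. For unbounded $I$ the construction goes through unchanged, because $z_n$ remains bounded by $1+n\sum_k a_k$ uniformly in $x$ and the density-based argument for the integral only uses points of $(q_k)$ inside the chosen subinterval.
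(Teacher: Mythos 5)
Your construction is correct, and it is worth noting that the paper itself gives no proof of this lemma: it is imported verbatim as Proposition~4.3 of \cite{Pas16a}, so there is nothing in the text to compare against line by line. Your argument is a valid self-contained replacement. Every step checks out: uniform convergence of $\sum_k a_k\psi_n(\cdot-q_k)$ gives continuity and $z_n\le z_{n+1}$ follows from $\psi_n\le\psi_{n+1}$; the monotone convergence theorem identifies $\lim_n\int_x^y z_n$ with $\int_x^y z_\infty$, which diverges because some $q_{k_0}$ lies in $(x,y)$ and $\int_x^y|t-q_{k_0}|^{-1}dt=+\infty$; and the covering estimate $I_z\subset\{q_k\}\cup\limsup_k B(q_k,2^{-k})$ together with $\sum_k(2\cdot 2^{-k})^s<\infty$ for every $s>0$ gives $H^s(I_z)=0$ via the Hausdorff--Cantelli lemma, hence $\dim_H I_z=0$. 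The one point you should make explicit is that for $x\notin\{q_k\}$ the finitely many indices with $|x-q_k|<2^{-k}$ still contribute finite terms $a_k/|x-q_k|$ precisely because $x\ne q_k$; you use this implicitly. Your approach --- dense singular points with summable weights decaying fast enough that the divergence set is trapped in a $\limsup$ of exponentially shrinking balls --- is very much in the spirit of the paper's own \Jarnik-type construction in Section~3.2 (continuous bumps supported near the sets $Y_{q,\alpha}$, with $I_f$ squeezed between $Q_{\alpha_0}$ and $\bigcap_\alpha Q_\alpha$); the difference is that your weights are tuned to kill every positive dimension rather than to hit a prescribed $\theta\in(0,1)$. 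The remark about unbounded $I$ is also sound, since the integral condition is local.
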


\section{Max-families with noninteger Hausdorff dimension}

In the theory of fractals two natural questions are present: \emph{Is there a set which Hausdorff dimension equals the given number?} And if the answer is positive: \emph{What  additional features this set can have?} The answers to the first question was given for instance by \cite{Gry2019PKM,Soltan2006,Soltan2020}, while the answer to the second question depends on the feature (see \cite{Sq2017} for connections with ergodicity and continued fractions, \cite{falconer_1985} for properties of distance sets and \cite{mauldin_2016} for examples of subrings of $\mathbb{R}$).

In this section we present two construction of max-families with an arbitrary Hausdorff dimension  $\theta \in (0,1)$. In the first (Cantor-type) approach we show that $\Omega_0(I)$ contains a set which can be factorized to a nowhere dense set of Hausdorff dimension $\theta$ and a dense set of Hausdorff dimension zero. 
In the second (\Jarnik-type) approach we construct a set in $I_f \in \Omega_0(I)$ such that 
$\dim_H(I_f\cap U)=\theta$ for an arbitrary open interval $U \subset I$. We provide two constructions; the Cantor-like sets can be described directly, while the second one relies on number-theoretical approach and thus does not give any insight on how does the provided set actually look like.

\subsection{Cantor-type construction}
We recall some basic notation and definitions from the fractal theory. We call a function $f:X\to X$ a \emph{contraction} if it is Lipschitz with constant $c_f\in (-1,1)$. We call a finite set $\mathcal{F}=\{f_1, \ldots, f_n\}$ of contractions defined on a compact metric space $X$ an \emph{iterated function system}, or \emph{IFS}. We say that the IFS $\mathcal{F}$ satisfies the \emph{open set condition} (abbreviated \emph{OSC}) if there exists an open and bounded set $V\neq\emptyset$ with $F(V)\subset V$, where $F(V):=f_1(V)\cup\ldots\cup f_n(V)$ and $f_i(V)\cap f_j(V)=\emptyset$ for all distinct $i,j \in \{1,\dots n\}$. 
\begin{prop}[Moran  \cite{Mor46}]\label{thm:moran}
Suppose that $\mathcal{F}$ satisfies the open set condition and each $f_i\in\mathcal{F}$ is a similarity with contraction constant $c_i$. If $A$ is the compact set such that $F(A)=A$, then $\dim_HA=s$, where $s$ is the unique solution of the equation
$$\sum\limits_{i=1}^n c_i^s=1.$$
\end{prop}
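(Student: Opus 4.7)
The plan is to establish the two inequalities $\dim_H A\le s$ and $\dim_H A\ge s$ separately; the open set condition will enter crucially only in the second one, while the fixed-point property $F(A)=A$ already suffices for the first.

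For the upper bound, I would iterate $A=F(A)=\bigcup_{i=1}^n f_i(A)$ to obtain, for every $k\in\N$, the natural cover $A=\bigcup_{|w|=k} f_w(A)$ indexed by words $w=(i_1,\ldots,i_k)\in\{1,\ldots,n\}^k$, where $f_w:=f_{i_1}\circ\cdots\circ f_{i_k}$ is a similarity with ratio $c_w:=c_{i_1}\cdots c_{i_k}$. Since $\max_{|w|=k}c_w\to 0$ as $k\to\infty$, these yield arbitrarily fine covers, and the telescoping identity
$$ \sum_{|w|=k}c_w^s=\Big(\sum_{i=1}^n c_i^s\Big)^{\!k}=1 $$
together with $\mathrm{diam}\,f_w(A)=c_w\cdot\mathrm{diam}\,A$ gives $\mathcal{H}^s(A)\le(\mathrm{diam}\,A)^s<\infty$, whence $\dim_H A\le s$.

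For the lower bound I would apply the mass distribution principle. The first step is to define a Borel probability measure $\mu$ supported on $A$ by assigning mass $c_w^s$ to each cylinder $f_w(A)$; consistency under refinement holds exactly because $\sum_i c_i^s=1$, and a Kolmogorov-type extension produces $\mu$. The target is a constant $C>0$ with $\mu\bigl(B(x,r)\bigr)\le Cr^s$ for all $x\in A$ and all sufficiently small $r$, which the mass distribution principle converts into $\mathcal{H}^s(A)\ge C^{-1}>0$. The main obstacle, and the place where OSC is indispensable, is the accompanying geometric packing estimate: given $B(x,r)$, restrict attention to the words $w$ for which $c_w\le r<c_{w^-}$ (with $w^-$ obtained by dropping the last letter). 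For such $w$, the images $f_w(V)$ of the OSC-witness $V$ have diameters comparable to $r$ with constants depending only on $\min_i c_i$, and the iterated OSC forces them to have pairwise disjoint interiors. A volume-packing argument in a neighbourhood of $B(x,r)$ then bounds, by a constant depending only on $V$ and $\min_i c_i$, how many such $f_w(V)$ can meet $B(x,r)$, and summing the corresponding masses $c_w^s\asymp r^s$ over this finite family yields the desired inequality $\mu\bigl(B(x,r)\bigr)\le Cr^s$.
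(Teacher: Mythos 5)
The paper does not prove this proposition at all: it is imported verbatim as a classical result of Moran (with the citation \cite{Mor46}) and used as a black box in the proof of Lemma~\ref{lem:DnV2}, so there is no in-paper argument to compare yours against. Your outline is the standard modern proof (essentially Hutchinson's and Falconer's treatment of Moran's theorem) and is correct in its structure: the upper bound from the self-similar covers $A=\bigcup_{|w|=k}f_w(A)$ together with $\sum_{|w|=k}c_w^s=1$, and the lower bound from the mass distribution principle applied to the self-similar measure, with the open set condition entering through the packing count for the cut-set $\{w : c_w\le r<c_{w^-}\}$. Two details deserve to be made explicit if you write this out in full. First, the measure is cleanest to construct as the push-forward under the coding map $\pi:\{1,\dots,n\}^{\N}\to A$ of the Bernoulli measure with weights $c_i^s$; the naive assignment of mass $c_w^s$ to $f_w(A)$ is ambiguous when cylinders overlap, whereas the push-forward still gives the needed one-sided bound $\mu(B(x,r))\le\sum c_w^s$ over the cut-set words whose cylinders meet the ball. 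Second, the packing argument counts sets $f_w(\overline{V})$, so you must record that $A\subseteq\overline{V}$ (which follows from $F(\overline{V})\subseteq\overline{V}$ and the uniqueness of the attractor) in order to pass from ``$f_w(A)$ meets $B(x,r)$'' to ``$f_w(\overline{V})$ meets $B(x,r)$''. With those two points supplied, the argument is complete and is the expected proof of the cited result.
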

\begin{lem}\label{lem:DnV2}
For every interval $I$ and every $\theta \in (0,1)$ there exists a family $(D_n)_{n=0}^\infty$ of closed sets such that 
\begin{enumerate}[(i)]
 \item \label{Dn1} $D_0=I$,
 \item \label{Dn2} $D_{n+1} \subset \interior D_n$ for all $n \in \N_+ \cup\{0\}$, 
 \item \label{Dn3} $D_\infty:=\bigcap_{n=0}^\infty D_n$ is an invariant set of some IFS,
 \item \label{Dn4} $\dim_H D_{\infty}=\theta$.
\end{enumerate}
\end{lem}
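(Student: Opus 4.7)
The plan is to realize $D_\infty$ as the attractor of a two-map similarity IFS sitting inside a compact subinterval of $I$, take the $D_n$ to be the Hutchinson iterates starting from that subinterval, and appeal to Moran's theorem (Proposition~\ref{thm:moran}) for the dimension.

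First I would fix a compact subinterval $[a,b] \subset \interior I$ and set $c := 2^{-1/\theta}$. Because $\theta \in (0,1)$ we have $c \in (0, 1/2)$, which leaves room to pick $\varepsilon > 0$ with $2\varepsilon + 2c(b-a) < b-a$. I would then take similarities $f_1, f_2$ of ratio $c$ with $f_1([a,b]) = [a+\varepsilon,\, a+\varepsilon + c(b-a)]$ and $f_2([a,b]) = [b - \varepsilon - c(b-a),\, b - \varepsilon]$; by the choice of $\varepsilon$ these are disjoint closed subintervals of $(a,b)$. Let $F$ denote the associated Hutchinson operator and set $D_0 := I$, $D_1 := [a,b]$, and $D_{n+1} := F(D_n) = f_1(D_n) \cup f_2(D_n)$ for $n \ge 1$. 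Then (i) holds by definition, and (iii) is automatic: since $F(D_1) \subset D_1$, the iterates $F^{k}(D_1)$ decrease to the unique compact $F$-invariant set, hence $D_\infty$ is that attractor.

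For (ii) I would argue by induction. The base step $D_1 \subset \interior I = \interior D_0$ is built in, and $D_2 \subset (a,b) = \interior D_1$ holds precisely because of the $\varepsilon$-offset. For the inductive step, if $D_n \subset \interior D_{n-1}$, then since each $f_i$ is a homeomorphism, $f_i(\interior D_{n-1})$ is open and lies in $f_i(D_{n-1})$, so $f_i(D_n) \subset \interior f_i(D_{n-1})$; taking the union and using the elementary fact $\interior A \cup \interior B \subset \interior(A \cup B)$ gives $D_{n+1} \subset \interior D_n$. For (iv) I would apply Proposition~\ref{thm:moran} with the open bounded set $V := (a,b)$, which verifies OSC since $f_1(V), f_2(V)$ are disjoint open subsets of $V$; the Moran equation $2c^s = 1$ then forces $s = \theta$.

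The only thing that needs care is to build the $\varepsilon$-gap into the IFS \emph{from the start}: the naive Hutchinson iterates of $[a,b]$ under the ``flush'' pair of similarities (with $\varepsilon = 0$) would share endpoints with the previous stage and (ii) would fail at every level. Once the gap is present, all four conditions follow from the single induction above, so no deeper obstacle is anticipated.
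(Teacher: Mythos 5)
Your proposal is correct and follows essentially the same route as the paper: a two-map similarity IFS of ratio $2^{-1/\theta}$ with an $\varepsilon$-offset to force $D_{n+1}\subset\interior D_n$, the homeomorphism induction for the nesting, and Moran's theorem for the dimension. The only (harmless) difference is that you interpose $D_1=[a,b]\subset\interior I$ rather than normalizing to $I=[0,1]$ at the outset, which if anything treats non-compact intervals slightly more carefully.
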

\begin{proof}
One can assume without loss of generality that $I=[0,1]$. Let $m:=(\tfrac12)^{1/\theta} \in (0,\frac12)$.
Take $\varepsilon \in (0,\tfrac1{2m}-1)$ and define an IFS $\mathcal{F}=\{F_L, F_R\}$ on $I$ by
\Eq{*}{
F_L(x):=m \cdot (x+\varepsilon) \qquad F_R(x):=1-F_L(x).
}
Then $F_L$ and $F_R$ are similarities with Lipschitz constants $m$, $F_L(I)=[m\varepsilon,m(1+\varepsilon)]$ and $F_R(I)=[1-m(1+\varepsilon),1-m\varepsilon]$.

But 
\Eq{*}{
\varepsilon<\tfrac1{2m}-1
\iff 2m(1+\varepsilon)<1
\iff
m(1+\varepsilon)<1-m(1+\varepsilon),
}
which yields $F_L(I) \cap F_R(I)=\emptyset$ and thus $\mathcal{F}$ satisfies the OSC with $U=\interior I$. Now define a family $(D_n)_{n=0}^\infty$ by
\Eq{*}{
D_0&:=I\\
D_{n+1}&:=F(D_n) & \text{ for } n \ge 0.
}
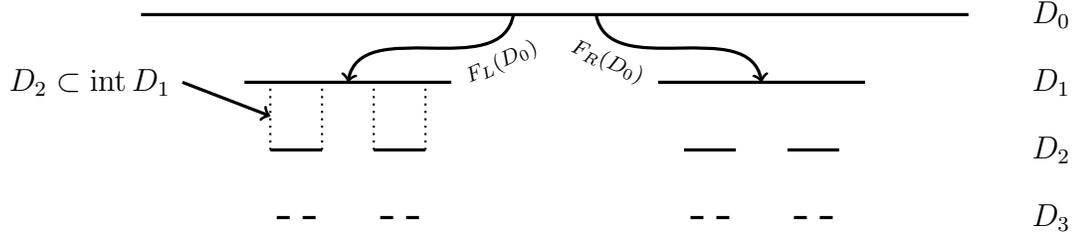
\begin{figure}[h!]
\centering
\begin{tikzpicture}[x=11cm,y=0.9cm, very thick]
\draw (0,0)--(1,0); \draw (1.1,0) node {$D_0$};
\draw (1/8,-1)--(3/8,-1);
\draw (5/8,-1)--(7/8,-1);
\draw (1.1,-1) node {$D_1$};

\foreach \x in{5,9,21,25}{\draw (\x/32,-2)--++(2/32,0);}
\draw (1.1,-2) node {$D_2$};

\foreach \x in{21,25,37,41,85,89,101,105}{\draw (\x/128,-3)--++(2/128,0);}
\draw (1.1,-3) node {$D_3$};

\foreach \x in {5,7,9,11}{
\draw[dotted,thick] (\x/32,-2)--++(0,1);}

\draw[->] (0.05,-1) node[left]{$D_2\subset \interior D_1$} --(20/128,-1.5);
\draw[->] (0.45,0) to [out=260, in=80, edge node={node [sloped,below,pos=0.2] {{\tiny $F_L(D_0)$}}}] (1/4,-1);
\draw[->] (0.55,0) to [out=280, in=100, edge node={node [sloped,below,pos=0.2] {{\tiny $F_R(D_0)$}}}] (3/4,-1);
\end{tikzpicture}
\caption{Construction of $D_n$'s.}\label{figure1}
\end{figure}
Obviously each $D_n$ is a closed subset of $I$. Now we prove that the sequence $(D_n)_{n=0}^\infty$ satisfies all conditions \eq{Dn1}--\eq{Dn4}.
Condition \eq{Dn1} is obvious. To show the second property observe that
\Eq{*}{
D_1 =\big[m\varepsilon,m(1+\varepsilon)\big] \cup \big[1-m(1+\varepsilon),1-m\varepsilon\big] \subset (0,1)=\interior D_0.
}
Moreover if $D_{n+1} \subset \interior D_n$ for some $n \in \N$, then as both $F_L$ and $F_R$ are homeomorphisms we get
\Eq{*}{
F_L(D_{n+1}) \subset \interior F_L(D_n) \quad \text{ and } \quad F_R(D_{n+1}) \subset\interior F_R(D_n),
}
and thus $D_{n+2} \subset \interior D_{n+1}$. By simple induction we obtain property \eq{Dn2}.

Denote $D_\infty=\bigcap_{n=0}^\infty D_n$. Condition \eq{Dn3} follows from \eq{Dn2} and the general theory of fractal sets (see for instance Theorem 9.1 in \cite{Fal14}).

To check the last condition denote $s:=\dim_H D_\infty$. Then, by Proposition \ref{thm:moran} we have
$2m^s=1$. Thus $s=\frac{\ln (1/2)}{\ln m}=\theta$, which is \eq{Dn4}.
\end{proof} 


\begin{rem} In Lemma \ref{lem:DnV2} the set $D_\infty$ has Lebesgue measure zero. It turns out that the entire construction presented there is equivalent to the construction of the uniform Cantor set on the interval $[m\varepsilon/(1-m),1-m\varepsilon/(1-m)]$ with the middle part of length $\frac{(1-2m)(1-m-2m\varepsilon)}{1-m}$ removed  (that is both constructions lead to the same set).
\end{rem}



\begin{lem}\label{lem:theta}
For every $\theta \in [0,1]$ there exists a family $(d_n \colon I \to \R)_{n=1}^\infty$ of continuous functions such that $0\le d_1\le d_2 \le \dots $, the set $I_d$ is nowhere dense, and $\dim_HI_d=\theta$.
\end{lem}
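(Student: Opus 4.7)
The plan is to reduce the statement to Lemma \ref{lem:4} by exhibiting, for each $\theta \in [0,1]$, a nested family $(D_n)_{n=0}^\infty$ of closed subsets of $I$ with $D_0 = I$ and $D_{n+1} \subset \interior D_n$ such that $D_\infty := \bigcap_{n=0}^\infty D_n$ is nowhere dense and has Hausdorff dimension $\theta$. The continuous functions $d_n := \sum_{i=0}^n \delta_i$ produced in the proof of Lemma \ref{lem:4} are partial sums of $[0,1]$-valued Tietze-type bumps, so $0 \le d_1 \le d_2 \le \cdots$, and $I_d = D_\infty$ by that lemma. Thus the problem reduces entirely to producing suitable $(D_n)$.

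For the main range $\theta \in (0,1)$ I would simply invoke Lemma \ref{lem:DnV2}, which already hands over a nested family with $\dim_H D_\infty = \theta$. Nowhere denseness of $D_\infty$ comes for free from that construction: at stage $n$ the set $D_n$ is a union of $2^n$ intervals each of length $m^n$ with $m < 1/2$, so $\lambda(D_\infty) \le (2m)^n \to 0$, and a closed subset of $\R$ of Lebesgue measure zero has empty interior.

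The two endpoints require direct constructions. For $\theta = 0$ I would fix any $x_0 \in \interior I$ and take $D_n$ to be a closed interval centered at $x_0$ of length $1/n$ (intersected with $I$); then $D_\infty = \{x_0\}$, which is nowhere dense and of Hausdorff dimension $0$. For $\theta = 1$ I would build a fat Cantor set: starting from $D_0 = I$, at each stage $n+1$ remove from the interior of every component of $D_n$ a centered open subinterval, choosing the lengths so that the total removed measure over all stages is strictly smaller than $\lambda(I)$. The resulting $D_\infty$ is totally disconnected (hence nowhere dense) yet of positive Lebesgue measure, which forces $\dim_H D_\infty = 1$.

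The only real subtlety lies in the case $\theta = 1$: the IFS of Lemma \ref{lem:DnV2} enforces the open set condition with contraction ratio $m < 1/2$, so Moran's equation $2m^s = 1$ never yields $s = 1$, and one must step outside the self-similar framework to a direct measure-preserving Cantor construction. Verifying $D_{n+1} \subset \interior D_n$ at each stage of the fat Cantor construction is straightforward since the removed interval is open and strictly inside each component, and the nowhere-denseness of the limit set is immediate from its total disconnectedness.
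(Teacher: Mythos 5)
For $\theta\in(0,1)$ your argument is exactly the paper's: feed the family $(D_n)$ from Lemma~\ref{lem:DnV2} into Lemma~\ref{lem:4}, and note that the limit set has Lebesgue measure zero, so being closed it is nowhere dense. The endpoints are where you diverge. For $\theta=0$ the paper invokes Lemma~\ref{dH0} while you shrink closed intervals onto a point; both ideas work, though yours is arguably cleaner, since the set $I_z$ produced by Lemma~\ref{dH0} is dense in $I$ (every max-family has dense $I_f$), so it is not itself nowhere dense and an extra step is needed there --- taking $d_n\equiv 0$ with $I_d=\emptyset$ would be simplest of all. For $\theta=1$ the paper takes $d_n\equiv n$, which gives $I_d=I$: full Hausdorff dimension but certainly not nowhere dense. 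Your fat-Cantor construction is the right way to honour the ``nowhere dense'' clause of the statement as literally written, and you are correct that Moran's equation $2m^s=1$ cannot reach $s=1$ within the self-similar framework of Lemma~\ref{lem:DnV2}.

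One step of your $\theta=1$ construction fails as described. If $D_{n+1}$ is obtained from $D_n$ only by deleting an open interval from the middle of each component, then every endpoint of every component of $D_n$ survives into $D_{n+1}$, and such endpoints do not lie in $\interior D_n$: already $D_1=[0,\tfrac12-\varepsilon]\cup[\tfrac12+\varepsilon,1]$ contains the points $0$ and $1$, which are not in $\interior D_0=(0,1)$. So the hypothesis $D_{n+1}\subset\interior D_n$ of Lemma~\ref{lem:4} is violated at every stage, and your assertion that verifying it is straightforward is not right. The repair is easy: also trim a small open collar off both ends of each component, keeping the total length removed over all stages summable and strictly less than $\lambda(I)$; this is precisely the role the shift $\varepsilon$ plays in $F_L(x)=m(x+\varepsilon)$ in Lemma~\ref{lem:DnV2}. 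With that adjustment your proof is complete.
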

\begin{proof}
For $\theta =0$ the statement is an easy implication of Lemma~\ref{dH0}. Similarly, for $\theta=1$ we can take $d_n \equiv n$.

For $\theta \in (0,1)$  set a family $(D_n)_{n=0}^\infty$ like in Lemma~\ref{lem:DnV2}. By Lemma~\ref{lem:4} we get that 
$I_d=\bigcap_{n=0}^\infty D_n \in \Omega(I)$ which is easily equivalent to our statement.
\end{proof}

\begin{thm}\label{thm:anydH}
For every $\theta \in [0,1]$, there exists a max-family $(f_n \colon I \to \R)_{n=1}^\infty$ such that $\dim_HI_f=\theta$ and $I_f$ can be decomposed to a nowhere dense set and a set of Hausdorff dimension zero.
\end{thm}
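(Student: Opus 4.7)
The plan is to superimpose two ingredients: a family supplied by Lemma~\ref{lem:theta} whose divergence set is nowhere dense and has Hausdorff dimension exactly $\theta$, and a genuine max-family supplied by Lemma~\ref{dH0} whose divergence set has Hausdorff dimension zero. Their pointwise sum will be the desired max-family $(f_n)$, and Lemma~\ref{lem:sum} will deliver both the decomposition and the dimension count essentially for free.

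Concretely, I would first apply Lemma~\ref{lem:theta} to obtain a sequence of continuous functions $(d_n\colon I\to\R)_{n=1}^\infty$ with $0\le d_1\le d_2\le\cdots$, with $I_d$ nowhere dense and $\dim_H I_d=\theta$. In parallel, Lemma~\ref{dH0} yields a max-family $(z_n\colon I\to\R_+)_{n=1}^\infty$ with $\dim_H I_z=0$. I would then set $f_n:=d_n+z_n$. Continuity and monotonicity of $(f_n)$ are immediate. Since $d_n\ge 0$, for every $x,y\in I$ with $x<y$ one has
\[
\int_x^y f_n(t)\,dt \;\ge\; \int_x^y z_n(t)\,dt \;\longrightarrow\; +\infty,
\]
so $(f_n)$ is itself a max-family.

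By Lemma~\ref{lem:sum}, $I_f=I_d\cup I_z$. This is already the decomposition demanded by the theorem: $I_d$ is nowhere dense, and $I_z$ has Hausdorff dimension zero. Countable stability of Hausdorff dimension then gives
\[
\dim_H I_f \;=\; \max\{\dim_H I_d,\;\dim_H I_z\} \;=\; \max\{\theta,0\} \;=\; \theta,
\]
finishing the argument.

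I do not expect a substantial obstacle: the real content is concentrated in the two input lemmas, and the combinatorial step is exactly the union-closure principle already recorded in the corollary following Lemma~\ref{lem:sum}. The one point worth a moment of care is the interaction at the endpoint $\theta=1$: Lemma~\ref{lem:theta} must be read as furnishing a nowhere dense $I_d$ even in that case (for instance a fat Cantor set of full Hausdorff dimension realised through Lemma~\ref{lem:4} applied to a suitable nested sequence of closed sets), rather than the trivial $d_n\equiv n$, since the latter would make $I_d=I$ and destroy the ``nowhere dense'' clause of the decomposition. With that reading in place, the construction above handles all $\theta\in[0,1]$ uniformly.
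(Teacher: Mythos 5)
Your proof is essentially identical to the paper's: both take $f_n=d_n+z_n$ with $(d_n)$ supplied by Lemma~\ref{lem:theta} and $(z_n)$ by Lemma~\ref{dH0}, invoke Lemma~\ref{lem:sum} to get $I_f=I_d\cup I_z$ (hence the decomposition and $\dim_H I_f=\theta$), and verify the max-family property from $d_n\ge 0$ exactly as you do. Your closing caveat about $\theta=1$ is well taken---the paper's own proof of Lemma~\ref{lem:theta} takes $d_n\equiv n$ in that case, which gives $I_d=I$ and is \emph{not} nowhere dense, so your suggested repair (a nowhere dense set of full Hausdorff dimension, e.g.\ a fat Cantor set realised via Lemma~\ref{lem:4}) actually fixes a small gap in the source rather than introducing one.
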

\begin{proof}
Take $\theta \in [0,1]$ arbitrarily. By Lemma~\ref{dH0} one can take a max-family $(z_n \colon I \to \R_+)_{n=1}^\infty$ such that $\dim_HI_z=0$. Furthermore, by Lemma~\ref{lem:theta} we can take a family $(d_n \colon I \to \R_+)_{n=1}^\infty$ such that $I_d$ is nowhere dense, $\dim_HI_d=\theta$, and
\Eq{dmon}{
0\le d_1\le d_2\le \cdots.
}

Let $f_n:=d_n+z_n$ for all $n \in \N$. Then, by Lemma~\ref{lem:sum}, we have $\dim_HI_q=\theta$ and $I_f=I_d\cup I_z$ admit a decomposition mentioned in the statement.

Therefore it is sufficient to show that $(f_n)_{n=1}^\infty$ is a max-family. As $(z_n)_{n=1}^\infty$ is a max-family we have $0\le z_1\le z_2 \le \dots$. Binding this property with \eq{dmon} and the definition of the sequence $(f_n)_{n=1}^\infty$ we easily obtain
\Eq{*}{
0 \le f_1 \le f_2 \le \cdots.
}
Thus the only remaining part to be proved is that \eq{E:intinfty} holds. However, as $d_n \ge 0$ for all $n \in \N$ and $(z_n)_{n=1}^\infty$ is a max-family, we obtain
\begin{multline*}
\lim_{n\to\infty} \int_x^y f_n(t)dt=\lim_{n\to\infty} \int_x^y (d_n+z_n)(t)dt \ge \lim_{n\to\infty} \int_x^y z_n(t)dt=+\infty \\ \text{ for all }x,y \in I\text{ with }x<y,
\end{multline*}
which completes the proof.
\end{proof}

\begin{rem} 
The set $I_q$ in Theorem \ref{thm:anydH} cannot have positive measure (compare with Proposition \ref{Prop:P1} and Proposition \ref{Prop:P2}). It turns out that the set $D_\infty$ obtained in Lemma \ref{lem:DnV2} has zero Lebesgue measure. In fact all Borel sets $D$ with $\dim_HD\in(0,1)$ have zero measure. This is because the Hausdorff measure $H^d$ is up to a constant equivalent to the Lebesgue one, i.e. for integer $d$, $H^d(D)=c(d)\lambda^d(D)$ where $c(d)$ is a known constant. Hence, if $\lambda^1(D)>0$, then $H^1(D)>0$ and thus $\dim_HD=1$.
\end{rem}

\subsection{\Jarnik-type construction}
In what follows we show that for every $\theta \in (0,1)$ there exists a max-family $(f_n \colon I \to \R)_{n=1}^\infty$ such that $\dim_H(I_f \cap J)=\theta$ for every open subinterval $J \subset I$.
The important theorem concerning neighbourhoods of rational numbers will be used. 

\begin{prop}[\Jarnik{} \cite{Jar31}]\label{prop:Jarnik}
Suppose $\alpha>2$. Let $Q_\alpha$ be the set of real numbers $x\in [0,1]$ for which the inequality
$$\|qx\|\leq q^{1-\alpha}$$
is satisfied by infinitely many positive integers $q$, where 
$$\|y\|:=\min\limits_{z\in\mathbb{Z}}|y-z|.$$
Then $\dim_H Q_\alpha=2/\alpha$. Moreover, $Q_\alpha$ is dense in $[0,1]$ and $\dim_H(Q_\alpha\cap J)=2/\alpha$ for every subinterval $J\subset [0,1]$.
\end{prop}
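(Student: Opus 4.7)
The plan is to prove the Hausdorff dimension equality by separately establishing matching upper and lower bounds, and then to obtain the per-subinterval statement by observing that the lower bound construction is local and can be localised inside an arbitrary open subinterval $J$.

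For the upper bound $\dim_H Q_\alpha \le 2/\alpha$, rewrite $Q_\alpha$ as a $\limsup$-set
\Eq{*}{
Q_\alpha = \bigcap_{Q \in \N} \bigcup_{q \ge Q} A_q,
\qquad
A_q := \{x\in[0,1] : \|qx\|\le q^{1-\alpha}\}.
}
For each $q$, the set $A_q$ is a union of at most $q+1$ closed intervals of length $2q^{-\alpha}$, centred at the points $p/q$ with $p\in\{0,1,\dots,q\}$. Taking any $s > 2/\alpha$ and using $\bigcup_{q\ge Q} A_q$ as a $\delta$-cover (with $\delta = 2 Q^{-\alpha}$), we obtain the estimate $\mathcal{H}^s_\delta(Q_\alpha)\le 2^s\sum_{q\ge Q}(q+1)q^{-\alpha s}$, which is a convergent tail of a convergent series (since $\alpha s > 2$) and tends to $0$ as $Q\to\infty$. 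Hence $\mathcal{H}^s(Q_\alpha)=0$ for every $s>2/\alpha$.

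For the lower bound and the per-subinterval statement, fix an open subinterval $J\subset[0,1]$ and any $s<2/\alpha$. I would build a Cantor-type subset $K\subset J \cap Q_\alpha$ along a rapidly growing sequence $(q_k)$ of integers. Starting from a closed interval $I_0\subset J$, inductively: at stage $k$, inside each surviving interval $I$ from stage $k-1$, select the $\bigl\lfloor c|I|q_k\bigr\rfloor$ intervals of the form $[p/q_k - q_k^{-\alpha},p/q_k + q_k^{-\alpha}]$ that sit strictly inside $I$ (for suitable small $c$). Each such child interval has length $2q_k^{-\alpha}$, lies in $A_{q_k}$, and the children are mutually disjoint by construction. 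Choose $q_{k+1}$ large enough that the spacing between children at level $k+1$ is much smaller than the spacing at level $k$, and place on $K=\bigcap_k K_k$ the natural probability measure $\mu$ obtained by distributing mass uniformly between children at every level. Since every point of $K$ is contained in infinitely many $A_{q_k}$, one has $K\subset Q_\alpha$. Applying Frostman/the mass distribution principle to the estimate $\mu(B(x,r))\le Cr^s$ then yields $\dim_H(Q_\alpha\cap J)\ge \dim_H K \ge s$, and letting $s\nearrow 2/\alpha$ gives the desired lower bound in $J$.

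The main obstacle is the calibration of the parameters $q_k$ and the verification of the mass distribution estimate $\mu(B(x,r))\le Cr^s$. One needs to analyse the bound in two regimes depending on where $r$ lies relative to the length $2q_k^{-\alpha}$ of the level-$k$ intervals and the typical spacing $\sim 1/q_k$ between them; choosing $q_{k+1}$ super-polynomially large in $q_k$ makes these two regimes decouple cleanly, and the combinatorial count of the number of children in each parent interval together with $s<2/\alpha$ produces the required power of $r$. The density of $Q_\alpha$ in $[0,1]$ follows at once from the fact that the construction can be initialised in every open subinterval, so the set $Q_\alpha\cap J$ is always nonempty; combined with the upper bound, which already holds globally and hence in every subinterval, this yields $\dim_H(Q_\alpha\cap J)=2/\alpha$ for all subintervals $J\subset[0,1]$.
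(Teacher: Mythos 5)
The paper offers no proof of this proposition at all --- it is quoted from \Jarnik{}'s 1931 paper (it is the \Jarnik{}--Besicovitch theorem, cf.\ Theorem 10.3 in Falconer's book) --- so there is nothing in the text to compare your argument against; I can only assess the proposal on its own merits. Your upper bound is correct and standard: $Q_\alpha\subseteq\bigcup_{q\ge Q}A_q$ for every $Q$, each $A_q$ is covered by at most $q+1$ intervals of length $2q^{-\alpha}$, and $\sum_q(q+1)q^{-\alpha s}<\infty$ for $s>2/\alpha$, so $\mathcal{H}^s(Q_\alpha)=0$. The density claim is also fine (every rational $a/b$ already lies in $Q_\alpha$, since $\|qx\|=0$ for $q=kb$).

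The lower bound, however, has a genuine gap that no calibration of the sequence $(q_k)$ will repair: a Cantor set built from a \emph{single} denominator $q_k$ at each level is simply too small. At stage $k$ each parent $I$ (of length $d_{k-1}=2q_{k-1}^{-\alpha}$) receives $m_k\approx c\,d_{k-1}q_k$ children of length $d_k=2q_k^{-\alpha}$, so the total count is $N_k=m_1\cdots m_k\approx C\,q_k$ with $C$ depending only on the first $k-1$ stages; since $K\subseteq K_k$ is covered by $N_k$ intervals of length $d_k$, we get $\mathcal{H}^s(K)\le N_k d_k^{\,s}\approx C'\,q_k^{\,1-\alpha s}\to0$ for every $s>1/\alpha$ once $q_k$ grows fast, i.e.\ $\dim_H K\le 1/\alpha<2/\alpha$. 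Equivalently, the uniform measure gives a level-$k$ interval mass $1/N_k\approx 1/(Cq_k)$, which exceeds $d_k^{\,s}\approx q_k^{-\alpha s}$ for any $s>1/\alpha$, so the Frostman estimate $\mu(B(x,r))\le Cr^s$ already fails at $r=d_k$. The exponent $2$ in $2/\alpha$ comes from using, at each stage, \emph{all} reduced fractions $p/q$ with $q$ ranging over a whole block $[Q_k,2Q_k]$: these supply $\asymp|I|\,Q_k^2$ centres inside a parent $I$, pairwise separated by $|p/q-p'/q'|\ge 1/(qq')\ge\tfrac14 Q_k^{-2}\gg Q_k^{-\alpha}$ (this is where $\alpha>2$ enters), hence $\asymp|I|\,Q_k^2$ disjoint children of length $\asymp Q_k^{-\alpha}$, and with that count the mass distribution principle does deliver every $s<2/\alpha$. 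After this replacement (or by invoking Falconer's Theorem 10.3 or the mass transference principle directly), the remainder of your plan --- localisation inside an arbitrary $J$ and the two-regime analysis of $\mu(B(x,r))$ --- goes through.
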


\begin{thm}
For every interval $I$ and every $\theta \in [0,1]$ there exists a max-family $(f_n \colon I \to \R)_{n=1}^\infty$ of continuous functions such that $\dim_H(I_f\cap U)=\theta$ for every open subinterval $U$ of $I$.
\end{thm}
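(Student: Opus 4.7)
My plan is to mimic the additive-decomposition strategy of Theorem \ref{thm:anydH}, but with the Cantor-type set from Lemma \ref{lem:DnV2} replaced by a \Jarnik-type set supplied by Proposition \ref{prop:Jarnik}. The boundary cases are trivial: for $\theta=1$ the constant sequence $f_n\equiv n$ works, and for $\theta=0$ the max-family of Lemma \ref{dH0} already satisfies $\dim_H(I_f\cap U)=0$ for every open $U$.

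Fix $\theta\in(0,1)$ and set $\alpha:=2/\theta>2$. I would manufacture a nondecreasing sequence of continuous nonnegative functions $(d_n)$ whose divergence set $I_d$ is pinched between two \Jarnik-type sets of Hausdorff dimension $\theta$ in every subinterval. Pick a continuous $\eta\colon[0,\infty)\to[0,1]$ with $\eta|_{[0,1]}\equiv 1$ and $\eta|_{[2,\infty)}\equiv 0$, and for $q\in\N$ set
\[ \phi_q(x):=\eta\bigl(q^{\alpha-1}\|qx\|\bigr),\qquad d_n(x):=\sum_{q=1}^n \phi_q(x), \]
where $\|y\|$ denotes distance from $y$ to the nearest integer. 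Each $d_n$ is continuous on $I$ and $0\le d_1\le d_2\le\cdots$.

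The heart of the argument is the identification of $I_d$. For $c>0$ write
\[ Q_\alpha^{(c)}:=\{x\in I:\|qx\|\le c\,q^{1-\alpha}\text{ for infinitely many }q\in\N\}. \]
If $x\in Q_\alpha^{(1)}$ then $\phi_q(x)=1$ for infinitely many $q$, so $d_n(x)\to+\infty$; conversely, if $x\in I_d$ then $\phi_q(x)>0$ for infinitely many $q$, hence $\|qx\|<2q^{1-\alpha}$ infinitely often. This yields the sandwich $Q_\alpha^{(1)}\subseteq I_d\subseteq Q_\alpha^{(2)}$. Proposition \ref{prop:Jarnik} gives $\dim_H(Q_\alpha^{(1)}\cap U)=\theta$ for every open subinterval $U\subset I$ (using the periodicity of the defining condition to transfer from $[0,1]$ to arbitrary $I$), while the matching upper bound $\dim_H(Q_\alpha^{(c)}\cap U)\le\theta$ follows from the elementary inclusion $Q_\alpha^{(c)}\subseteq Q_{\alpha-\varepsilon}$ (valid once $q\ge c^{1/\varepsilon}$) combined with Proposition \ref{prop:Jarnik} at $\alpha-\varepsilon$ and $\varepsilon\downarrow 0$. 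Hence $\dim_H(I_d\cap U)=\theta$ for every open $U\subset I$.

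To finish I would repeat the last step of Theorem \ref{thm:anydH}: pick the zero-dimensional max-family $(z_n\colon I\to\R_+)_{n=1}^\infty$ from Lemma \ref{dH0} and set $f_n:=d_n+z_n$. Lemma \ref{lem:sum} identifies $I_f=I_d\cup I_z$; as $\dim_H I_z=0$, the dimension in every subinterval is dictated by $I_d$, and the integral condition \eq{E:intinfty} follows from $d_n\ge 0$ together with the analogous property of $(z_n)$. The only delicate step is the upper bound $\dim_H(Q_\alpha^{(2)}\cap U)\le\theta$, since Proposition \ref{prop:Jarnik} is stated only for the tight constant $c=1$; the monotonicity trick above upgrades it to any constant at the cost of a slight worsening of $\alpha$. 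Everything else---continuity of $\phi_q$, monotonicity of $d_n$, the additive decomposition, and the integral estimate---runs in exact parallel with Theorem \ref{thm:anydH}.
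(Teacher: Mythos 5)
Your proof is correct and follows essentially the same route as the paper's: the same \Jarnik-set sandwich built from bump functions supported near the rationals (the paper uses Tietze with outer radius $\tfrac{q+1}{q}q^{1-\alpha_0}$ where you use an explicit $\eta$ with constant $2$), the same monotonicity trick $Q_\alpha^{(c)}\subseteq Q_{\alpha-\varepsilon}$ to get the matching upper bound on the dimension, and the same final addition of the zero-dimensional max-family from Lemma~\ref{dH0}. Your explicit treatment of the boundary cases $\theta\in\{0,1\}$ is a small but welcome addition, since the choice $\alpha_0=2/\theta$ only makes sense for $\theta\in(0,1)$.
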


\begin{proof}
 Set $I=[0,1]$ and $\alpha_0:=\tfrac2\theta$. For $q \in \N$ and $\alpha>2$ define
\Eq{*}{
Y_{q,\alpha}&:=\{x \in I \colon \|qx\| \le q^{1-\alpha}\},\\
Z_{q,\alpha}&:=\{x \in I \colon \|qx\| \le \tfrac{q+1}{q}q^{1-\alpha}\}.
}
Then, as $\|\cdot\|$ is continuous, we have $\cl Y_{q,\alpha} \subset \interior Z_{q,\alpha}$ for all $q \in \N$ and $\alpha>\alpha_0$. Now let $\delta_q \colon I \to [0,1]$ be defined by 
\Eq{*}{
 \delta_q(x)=
 \begin{cases}
 0 & \text{ for }x \in I \setminus Z_{q,\alpha_0}; \\
 1 & \text{ for }x \in Y_{q,\alpha_0}.
 \end{cases}
}
Define $r_n \colon I \to \R_+$ by $r_n:=\delta_1+\dots+\delta_n$. 
Then we obtain
\Eq{*}{
I_r\supseteq\{x \in I \colon x\in Y_{q,\alpha_0} \text{ for infinitely many }q\in \N\}=Q_{\alpha_0}.
}
On the other hand for all $\alpha<\alpha_0$ there exists a number $q_0 \in \N$ such that 
\Eq{*}{
\tfrac{q+1}{q} q^{1-\alpha_0}\le q^{1-\alpha} \qquad \text{ for all }q\ge q_0.
}
Thus, for all $\alpha\in(2,\alpha_0)$ we have 
\Eq{*}{
I_r&\subseteq\{x \in I \colon x\in Z_{q,\alpha_0} \text{ for infinitely many }q\in \N\}\\
&=\{x \in I \colon x\in Z_{q,\alpha_0} \text{ for infinitely many }q\in \N\text{ with }q \ge q_0\}\\
&\subseteq \{x \in I \colon x\in Y_{q,\alpha} \text{ for infinitely many }q\in \N\text{ with }q \ge q_0\}\\
&= \{x \in I \colon x\in Y_{q,\alpha} \text{ for infinitely many }q\in \N\}=Q_\alpha.
}
Finally we have
\Eq{*}{
Q_{\alpha_0} \subseteq I_r \subseteq \bigcap_{\alpha \in (2,\alpha_0)} Q_\alpha.
}
Therefore by Proposition~\ref{prop:Jarnik} we obtain that for every open interval $U \subset I$ we have 
\Eq{*}{
\tfrac{2}{\alpha_0}=\dim_H (Q_{\alpha_0} \cap U)&\leq \dim_H(I_r \cap U)
\le \inf_{\alpha\in(2,\alpha_0)} \dim_H( Q_\alpha\cap U)
=\inf_{\alpha\in(2,\alpha_0)}\tfrac{2}{\alpha}=\tfrac{2}{\alpha_0}.
}
Consequently, $\dim_H(I_r \cap U)=\tfrac{2}{\alpha_0}=\theta$ for every open subinterval $U \subset I$. 

Now let $(z_n)_{n=1}^\infty$ be a family from Lemma~\ref{dH0} and let $f_n:=z_n+r_n$ for $n \in \N$. Then, as $r_n \ge 0$, we obtain that $(f_n)_{n=1}^\infty$ is a max-family. Furthermore by Lemma~\ref{lem:sum} we get that $I_f \supset I_r$ and $I_f \setminus I_r$ is of Hausdorff dimension zero which completes the proof. 
\end{proof}

\subsection*{Final conclusions and remarks}
At the very end let us put the reader's attention to few important problems. First, we cannot exclude that $\Omega(I)$ is a family of all $G_\delta$ subsets of $I$ and/or $\Omega_0(I)$ contains all dense $G_\delta$ subsets of $I$. In particular, it is interesting to find a full characterization of all elements of sets $\Omega(I)$ and $\Omega_0(I)$ (our results show that they can be complicated from the measure-theoretical point of view). Second, this problem has a natural multidimensional generalization where the domain of the integral in \eq{E:intinfty} is taken over all open subsets of a given domain. Finally, it is not known if the assumption $f_1\le f_2 \le \dots$ in the definition of $\Omega(I)$ can be relaxed.

\end{document}